\def\ex{\mathrm{ex}}
\newtheorem{thm}{Theorem}[section]
\newtheorem{lem}[thm]{Lemma}
\newcommand{\1}{{\uppercase\expandafter{\romannumeral1}}}
\newcommand{\2}{{\uppercase\expandafter{\romannumeral2}}}
\begin{document}
\title{On the maximum number of edges in $k$-critical graphs}
\author{
Cong Luo\thanks{School of Mathematical Sciences, University of Science and Technology of China, Hefei, Anhui 230026, China. Email: luoc@mail.ustc.edu.cn.}
~~~~~
Jie Ma\thanks{School of Mathematical Sciences, University of Science and Technology of China, Hefei, Anhui 230026, China. Research supported in part by the National Key R and D Program of China 2020YFA0713100, National Natural Science Foundation of China grant 12125106, and Anhui Initiative in Quantum Information Technologies grant AHY150200. Email: jiema@ustc.edu.cn.}
~~~~~
Tianchi Yang\thanks{Department of Mathematics, National University of Singapore, 119076, Singapore. Research supported in part by Professor Hao Huang's start-up grant at NUS and an MOE Academic Research Fund (AcRF) Tier 1 grant. Email: tcyang@nus.edu.sg.}}

\date{}
	
\maketitle

\begin{abstract}
A graph is called $k$-critical if its chromatic number is $k$ but any proper subgraph has chromatic number less than $k$.
An old and important problem in graph theory asks to determine the maximum number of edges in an $n$-vertex $k$-critical graph. 
This is widely open for any integer $k\geq 4$.
Using a structural characterization of Greenwell and Lov\'asz and an extremal result of Simonovits,
Stiebitz proved in 1987 that for $k\geq 4$ and sufficiently large $n$, this maximum number is less than the number of edges in the $n$-vertex balanced complete $(k-2)$-partite graph.
In this paper we obtain the first improvement on the above result in the past 35 years.
Our proofs combine arguments from extremal graph theory as well as some structural analysis.
A key lemma we use indicates a partial structure in dense $k$-critical graphs, which may be of independent interest. 
\end{abstract}

\section{Introduction}
All graphs we consider are finite and simple.
A graph $G$ is \emph{$k$-colorable} if we can assign $k$ colors to its vertices such that no adjacent vertices receive the same color.
We say a graph $G$ is  \emph{$k$-chromatic} if it is  $k$-colorable but not $(k-1)$-colorable.
A graph $G$ is called {\it $k$-critical} if $G$ is $k$-chromatic but every its proper subgraph is $(k-1)$-colorable.
For $k\in \{1,2\}$ the only $k$-critical graph is $K_k$, and the family of 3-critical graphs is precisely the family of odd cycles.
In this paper, we consider $k$-critical graphs for $k\ge 4$.
	
A central problem in graph theory asks to determine the maximum number of edges $f_k(n)$ in an $n$-vertex $k$-critical graph (see \cite{JT}).
Before we discuss the literature on $f_k(n)$, we would like to point out a relevant yet easy fact that the {\it Tur\'an graph} $T_k(n)$ (that is, the $n$-vertex balanced complete $k$-partite graph) has the maximum number of edges among all $n$-vertex $k$-chromatic graphs.
Dirac \cite{D} gave $f_6(n)\ge \frac14 n^2+n$ by considering the graphs obtained by joining two vertex-disjoint odd cycles with the same number of vertices.
Toft \cite{Toft70} proved that for every $k\ge 4$, there exists a positive constant $c_k$ such that $f_k(n)\ge c_k n^2$ holds for all integers $n\ge k$ (except $n=k+1$). 
In the most basic and interesting cases $k=4,5$, the constants are given by 
$$c_4\geq \frac{1}{16}=0.0625 \mbox{~~ and ~~} c_5\geq \frac{4}{31}\geq 0.129.$$
In the general case when $k\geq 6$, explicit constructions in \cite{Toft70} show that there exist infinitely many values of $n$ such that
$$f_k(n)\geq \left(\frac12-\frac{3}{2k-\delta_k}\right)n^2,$$
where $\delta_k=0$ if $k\equiv 0 \pmod 3$, $\delta_k=8/7$ if $k\equiv 1 \pmod 3$, and $\delta_k=44/23$ if $k\equiv 2 \pmod 3$.
To our best knowledge, no construction for giving better constants $f_k(n)/n^2$ have been found since.
It is also an open question if $\lim_{n\to \infty} \frac{f_k(n)}{n^2}$ exits for any $k\geq 4$.
In 2013, Pegden \cite{P} considered dense triangle-free $k$-critical graphs.
He constructed infinitely many $n$-vertex triangle-free 4-critical graphs with at least $\left(\frac{1}{16}-o(1)\right)n^2$ edges,
triangle-free 5-critical graphs with at least $\left(\frac{4}{31}-o(1)\right)n^2$ edges,
and triangle-free $k$-critical graphs with at least $\left(\frac{1}{4}-o(1)\right)n^2$ edges for every $k\geq 6$.
The last bound is asymptotically best possible by Tur\'an's theorem.
He also showed the existence of dense $k$-critical graphs without containing any odd cycle of length at most $\ell$ for any $\ell$,
which is again asymptotically tight for $k\ge 6$.
	
Turning to the upper bound of $f_k(n)$, since any $n$-vertex $k$-critical graph with $n>k$ does not contain $K_k$ as a subgraph,
by Tur\'an's theorem one can easily obtain that $f_k(n)< e(T_{k-1}(n))$ for any $n>k\geq 4$.
Using a characterization of Greenwell and Lov\'asz \cite{GL} for subgraphs of $k$-critical graphs and a classical theorem of Simonovits \cite{S68},
Stiebitz \cite{S} improved this trivial bound in 1987 by showing that
\begin{equation}\label{equ:old}
f_k(n)< e(T_{k-2}(n)) \mbox{ for sufficiently large integer } n.
\end{equation}
It has been 35 years since then and as far as we are aware, this remains the best upper bound.

There is a natural relation between $f_k(n)$ and the problem of determining the maximum number of copies of $K_{k-1}$ in $k$-critical graphs.
Abbott and Zhou \cite{AZ} generalized an earlier result of Stiebitz \cite{S} on 4-critical graphs and showed that for any $k\geq 4$ every $k$-critical graph on $n$ vertices contains at most $n$ copies of $K_{k-1}$.
The bound was further improved in \cite{KS}.
Recently, Gao and Ma \cite{GM} proved a sharp result that for any $n>k\geq 4$, any $k$-critical graph on $n$ vertices contains at most $n-k+3$ copies of $K_{k-1}$.
If we delete one edge for every $K_{k-1}$ in a $k$-critical graph on $n$ vertices,
then this can result in a graph without containing $K_{k-1}$.
Using Tur\'an's theorem and the above result of \cite{GM}, we can derive that
$$f_k(n)\le e(T_{k-2}(n))+n-k+3 \mbox{ for any } n>k\geq 4.$$
	
In this paper, we focus on the upper bound of $f_k(n)$.
Our first result improves the long-standing upper bound \eqref{equ:old} of Stiebitz \cite{S}.
\begin{thm}\label{Thm-general}
For any integer $k\ge 4$ and sufficiently large integers $n$, there exists a constant $c_k\geq \frac{1}{36(k-1)^2}$ such that $f_k(n)\leq e(T_{k-2}(n))-c_k n^2$.
\end{thm}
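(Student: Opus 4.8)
The plan is to argue by contradiction. Fix $c_k=\frac{1}{36(k-1)^2}$ and suppose $G$ is an $n$-vertex $k$-critical graph with $e(G)>e(T_{k-2}(n))-c_kn^2$ and $n$ large; I aim to exhibit at least $c_kn^2$ ``missing'' edges of $G$ relative to a complete $(k-2)$-partite graph on $V(G)$, which will contradict this. The first step is a reduction to a nearly $(k-2)$-partite skeleton. Since $n>k$ the graph $G$ is $K_k$-free, and by the result of Gao and Ma it contains at most $n-k+3$ copies of $K_{k-1}$; deleting one edge from each copy yields a $K_{k-1}$-free graph $G_0$ with $e(G_0)\ge e(G)-(n-k+3)>\ex(n,K_{k-1})-2c_kn^2$. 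A quantitative form of the stability theorem for $K_{k-1}$ (compatible with the extremal estimates of Simonovits cited above) then yields a partition $V(G)=V_1\cup\cdots\cup V_{k-2}$ minimising the number of ``defect pairs'' — edges inside one part together with non-edges between two distinct parts — for which the total defect is $O_k(c_kn^2)$ and each $|V_i|=\tfrac{n}{k-2}+O_k(\sqrt{c_k}\,n)$; in particular $G$ has few within-part edges.

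Next I would invoke the key structural lemma for dense $k$-critical graphs to upgrade this almost $(k-2)$-partite picture into a genuine partial structure. I expect the lemma to say, roughly, that after discarding a vanishing set of vertices a dense $k$-critical graph looks like the join of a clique $K_{k-3}$ with a large ``nearly bipartite'' graph $H$: one obtains vertices $w_1,\dots,w_{k-3}$ forming a clique, each of near-universal degree and lying in distinct parts, whose common neighbourhood $N$ has size $(1-o(1))n$, meets each $V_i$ in almost all of $V_i$, and induces a graph that is still close to a complete $(k-3)$-partite graph together with one ``odd'' part. This is the kind of configuration produced by a Greenwell--Lov\'asz-type analysis of subgraphs of $k$-critical graphs, the ingredient behind Stiebitz's bound \eqref{equ:old}, and it is what $k$-criticality will now be used to contradict.

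I would then use $k$-criticality in the form $\chi(G)=k$ while $\chi(G-e)=k-1$ for every $e\in E(G)$. As $G$ is not $(k-2)$-colourable, some part, say $V_1$, contains an edge $e=xy$; fix a $(k-1)$-colouring $\varphi$ of $G-e$, which necessarily has $\varphi(x)=\varphi(y)$. Because the bipartite graphs between the parts are almost complete, $\varphi$ must give the $k-2$ parts essentially pairwise-disjoint colour sets, so only one ``spare'' colour is free, and it has to untangle simultaneously the within-$V_1$ edges and the pair $\{x,y\}$. Following where this spare colour can live, and using that no local recolouring separates $x$ from $y$ (else $G$ would be $(k-1)$-colourable), one is driven to a large set $A$ contained in some part $V_i$ and a large set $B$ contained in some other part $V_j$ with no edge of $G$ between $A$ and $B$; the colour bookkeeping should give $|A|,|B|\ge\frac{n}{6(k-1)}$. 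Hence at least $|A||B|\ge\frac{n^2}{36(k-1)^2}$ cross-pairs are non-edges, and since a complete $(k-2)$-partite graph on the sets $V_i$ has at most $e(T_{k-2}(n))$ edges while $G$ has only $O_k(c_kn^2)$ within-part edges, $e(G)\le e(T_{k-2}(n))-\frac{n^2}{36(k-1)^2}+O_k(c_kn^2)$; taking the constant in the stability step small enough, this contradicts $e(G)>e(T_{k-2}(n))-c_kn^2$ and proves the theorem.

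The main obstacle is the key structural lemma itself: showing that a \emph{dense} $k$-critical graph really carries the partial ``$K_{k-3}$ joined to a nearly bipartite graph'' structure, rather than only the trivial fact that some part contains an edge, is where new input beyond Stiebitz's argument is required. The second delicate point is the colouring step — extracting the totally non-adjacent pair $A,B$ from the obstruction to separating $x$ and $y$, and verifying the quantitative bound $|A|,|B|\ge\frac{n}{6(k-1)}$. A subsidiary but unavoidable nuisance is running the extremal/stability input with constants polynomial in $k$ and controlling all the $O_k(\sqrt{c_k}\,n)$ error terms uniformly, so that the within-part edges and any imbalance of the parts do not erode the gain of $\frac{1}{36(k-1)^2}n^2$.
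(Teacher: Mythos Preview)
Your opening reduction --- remove $O(n)$ edges to make $G$ $K_{k-1}$-free, then apply a quantitative stability theorem to get a nearly balanced $(k-2)$-partition with $O(c_kn^2)$ defects --- matches the paper almost exactly (the paper uses F{\"u}redi's stability, Lemma~\ref{stability}). After that point, however, your plan and the paper diverge, and the two steps you yourself flag as ``obstacles'' are precisely where your proposal is not a proof.

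First, the key lemma is not the vague ``$K_{k-3}$ joined to a nearly bipartite graph'' structure you speculate about. Lemma~\ref{key} is a concrete local statement: if $x_1,\dots,x_{k-3}$ form a clique and $W\subseteq N(x_1)\cap\dots\cap N(x_{k-3})\cap N(u)$ for some further vertex $u$, then there is a bijection $\varphi:W\to W'$ with $N(\varphi(w))\cap W=\{w\}$ and $N(w)\cap W'=\{\varphi(w)\}$; that is, an \emph{induced matching} between $W$ and a new set $W'$. Its proof is a one-paragraph colouring trick (delete $uw$, $(k-1)$-colour, read off where $w$'s neighbour must lie). The paper then picks $x_0\in V_{k-2}$, greedily builds the clique $x_1,\dots,x_{k-3}$ and $u$ among low-missing-degree vertices, sets $Y=N(x_0)\setminus V_{k-2}$ and $W=N(x_1)\cap\dots\cap N(x_{k-3})\cap N(u)\cap V_{k-2}$, applies the lemma, and obtains $n\ge |W\cup Y\cup W'|\ge 2|W|+|Y|-3$; a direct estimate gives $2|W|+|Y|>n+3$, contradiction.

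Second, your proposed endgame --- find an edge $xy$ inside a part, study a $(k-1)$-colouring of $G-xy$, and extract large sets $A\subseteq V_i$, $B\subseteq V_j$ with no $G$-edge between them --- is not what the paper does and, as written, is not justified. The sentence ``the colour bookkeeping should give $|A|,|B|\ge\frac{n}{6(k-1)}$'' is exactly the missing argument; there is no mechanism offered for why the obstruction to recolouring forces an \emph{empty} bipartite graph of this size, rather than merely a sparse one or a long Kempe chain. The paper never needs such a step: the set $W'$ produced by Lemma~\ref{key} is almost disjoint from $W\cup Y$ essentially for free (each $\varphi(w)$ has a unique neighbour in $W$, so $|W'\cap Y|\le 1$), and the contradiction is a vertex count, not a missing-edge count.
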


Our second result considers	4-critical graphs.
A better upper bound for $f_4(n)$ than Theorem~\ref{Thm-general} is obtained in the following.
\begin{thm}\label{Thm-4critical}
For sufficiently large integers $n$, it holds that $f_4(n)<0.164 n^2$.
\end{thm}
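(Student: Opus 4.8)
The plan is to argue by contradiction. Suppose $G$ is a $4$-critical graph on $n$ vertices with $n$ sufficiently large and $e(G)\ge 0.164\,n^2$. The first step is to record the structural consequences of $4$-criticality. Since $n>4$, $G$ contains no $K_4$, as a copy would be a proper $4$-chromatic subgraph. Also $G$ has no dominating vertex: if $v$ dominated $G$, then $G-v$ would be $3$-critical, hence an odd cycle, so $G$ would be an odd wheel with only $O(n)$ edges, contradicting $e(G)=\Omega(n^2)$. Consequently, for every $v$ the graph $G[N_G(v)]$ has chromatic number at most $2$ — otherwise it contains an odd cycle $C$, and $\{v\}\cup V(C)$ induces a proper $4$-chromatic subgraph of $G$ — so $G$ is locally bipartite; in particular every neighbourhood is triangle-free. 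Finally, by the Gao--Ma theorem, $G$ has at most $n-1$ triangles, so deleting at most $n-1$ edges makes $G$ triangle-free.

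The second step uses the key lemma of the paper (the partial-structure result for dense $k$-critical graphs) with $k=4$. Since $e(G)$ is a fixed positive proportion of $n^2$, Simonovits-type stability together with $4$-criticality forces $G$ to contain a ``spread-out'' core: there is a partition $V(G)=A\cup B\cup Z$ with $|A|,|B|$ linear in $n$ such that almost all of $e(G)$ lies between $A$ and $B$, while $G[A]$, $G[B]$ and the edges touching the exceptional set $Z$ are comparatively few. One should picture $A\cup B$ as a blow-up of a single edge carrying the bulk of the edges.

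The heart of the proof is to play $4$-criticality against this core. The subtlety is that $K_4$-freeness and local bipartiteness alone permit edge densities as large as $n^2/4$ (as in near-complete bipartite graphs, or in dense blow-ups of sparse $4$-chromatic graphs such as the Gr\"otzsch graph), so one must use \emph{minimality} essentially: every proper subgraph of $G$ is $3$-colourable. Combining this with local bipartiteness, one shows that $A$ and $B$ cannot simultaneously be large, internally almost edgeless, and joined by a nearly complete bipartite graph; quantitatively, the $A$--$B$ bipartite graph must omit a positive proportion $\beta\,|A|\,|B|$ of the pairs, and/or $Z$ must contain a positive proportion of the vertices — for otherwise a short recolouring argument either $3$-colours $G$ or exhibits a proper $4$-chromatic subgraph. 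Inserting the at-most-$(n-1)$-triangles bound to control the residual edges inside $A$, inside $B$ and around $Z$, this reduces the estimate on $e(G)$ to the maximum, over the admissible sizes $|A|,|B|,|Z|$ and the deficit parameters, of an explicit quadratic expression; a (somewhat lengthy) optimization shows this maximum is strictly below $0.164\,n^2$, which is the desired contradiction.

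I expect the main obstacle to be precisely this quantitative use of criticality inside the core: converting the qualitative statement ``$G$ has no proper $4$-chromatic subgraph'' into a concrete positive deficit $\beta$ for the $A$--$B$ bipartite graph, robustly enough to survive the presence of the exceptional set $Z$ and of up to linearly many triangles. A secondary difficulty is the bookkeeping in the final optimization, where the improvement over Theorem~\ref{Thm-general} specialized to $k=4$ must be squeezed out — presumably by exploiting $4$-specific features, such as the triangle bound and a reduction to extremal estimates for triangle-free, non-$3$-colourable graphs.
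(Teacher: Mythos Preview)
Your proposal is not a proof; it is an outline with the crucial steps left as unspecified hopes. Two concrete gaps stand out. First, your ``Simonovits-type stability'' step is unjustified at this density: $0.164\,n^2$ is far below $e(T_2(n))\approx n^2/4$, so nothing forces $G$ to be close to a complete bipartite graph, and you give no mechanism for producing the partition $A\cup B\cup Z$. Second --- and you say this yourself --- you have no argument turning $4$-criticality into a quantitative deficit $\beta|A||B|$ in the $A$--$B$ bipartite graph; the phrase ``a short recolouring argument either $3$-colours $G$ or exhibits a proper $4$-chromatic subgraph'' is exactly the content of the theorem, not a method for proving it. You also misdescribe the key lemma: Lemma~\ref{key} for $k=4$ does not yield a global bipartition. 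It takes a set $W\subseteq N(x)\cap N(u)$ and produces a set $W'$ with $|W'|=|W|$ such that the edges between $W$ and $W'$ form a perfect matching --- a local, not global, conclusion.

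The paper's actual route is quite different and entirely concrete. After stripping high-triangle vertices and up to $n$ edges to get a triangle-free $G''$, one proves (Lemma~\ref{lem-4cycle}) that $G''$ contains a $4$-cycle $v_1v_2v_3v_4$ with $\sum d_{G''}(v_i)\ge 4d(G'')-O(n^{3/4})$. Setting $V_i=N_{G''}(v_i)$, $X=V_1\cap V_3$, $Y=V_2\cap V_4$, one applies Lemma~\ref{key} twice (packaged as Lemma~\ref{key-4critical}) to obtain sets $X'',Y''$ disjoint from $\bigcup V_i$ with $|X''|\ge |X|-o(n)$, $|Y''|\ge |Y|-o(n)$ and $e(G[X'',X])\le |X|$, $e(G[Y'',Y])\le |Y|$. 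Counting vertices gives a lower bound on $|X|+|Y|$; then one counts \emph{non-edges} of $G$ inside each $V_i$ (each $V_i$ is nearly independent), inside $X''\cup Y''$ (triangle-free, so at most $\tfrac14|X''\cup Y''|^2$ edges), and between $X''$ and $X$, $Y''$ and $Y$. The resulting explicit quadratic in $\sum|V_i|$ and $|X|+|Y|$ is maximized and evaluates below $0.1632\,n^2+o(n^2)$. There is no global stability, no abstract deficit $\beta$, and no unspecified optimization: the numbers $1.312$ and $0.312$ drop out of $8\cdot 0.164$ and $1.312-1$. If you want to salvage your write-up, the place to start is Lemma~\ref{lem-4cycle} and Lemma~\ref{key-4critical}, not a bipartite partition.
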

	
The proofs of both theorems rely on arguments from extremal graph theory (such as the stability of F{\"u}redi \cite{F}) and a structural lemma (Lemma~\ref{key}) given in the coming section.
Lemma~\ref{key} indicates a partial structure in dense critical graphs (under certain constraints),
which can be witnessed in many classical constructions of dense critical graphs (see the discussion at the beginning of Section 2).
For that, we would like to give a full construction for the well-known {\it Toft graph} (see \cite{Toft70}).
The vertex set of the Toft graph is formed by 4 disjoint sets $A,B,C,D$ with the same odd size,
where $A$ and $D$ are odd cycles, $B$ and $C$ are independent sets, the edges between $B$ and $C$ form a complete bipartite graph, and both of the edges in $(A,B)$ and in $(C,D)$ form perfect matchings.
It is easy to check that the $n$-vertex Toft graph is 4-critical and has $\frac{1}{16}n^2+n$ edges.
We remark that the Toft graph remains the best construction for dense 4-critical graphs.

We use standard notation in graph theory.
Let $\overline{G}$ denote the complement of the graph $G$.
For a vertex $v$ in a graph $G$, let $N_G(v)$ denote the neighborhood of $v$ in $G$, and let $d_G(v):=|N_G(v)|$ denote the degree of $v$ in $G$.
When $G$ is clear from the context, we often drop the subscript.
Let $d(G)$ denote the average degree of the graph $G$.
Also, for any $S\subseteq V(G)$, let $G[S]$ denote the induced subgraph of $G$ on the vertex set $S$.
For any disjoint sets $A,B\subseteq V(G)$, let $G[A,B]$ denote the induced bipartite subgraph of $G$ with bipartition $(A,B)$.

The rest of the paper is organized as follows. 
In Section~2, we prove a lemma which is key for the coming proofs. 
Then we prove Theorem~\ref{Thm-general} in Section~3 and Theorem~\ref{Thm-4critical} in Section~4.

\section{Key lemma}
In this section we prove our key lemma, which roughly says that if a $k$-critical graph $G$ contains certain $t$ copies of $K_{k-2}$ sharing $k-3$ common vertices,
then there exists an ``induced'' matching of size $t$ in $G$ which are connected to these cliques.
This indicates a substructure similar to the Toft graph (and many other examples of $k$-critical graphs).
In particular, it reveals that the structure of $k$-critical graphs cannot be close to the Tur\'an graph $T_{k-2}(n)$ and thus the inequality \eqref{equ:old} should not be tight .
\begin{lem}\label{key}
Let $k\ge 4$ and let $G$ be a $k$-critical graph.
Suppose that $G\left[\{x_1,x_2,\dots, x_{k-3}\}\right]$ forms a copy of $K_{k-3}$ and
there exists a set $W\subseteq N(x_1)\cap \dots\cap N(x_{k-3})\cap N(u)$ for some vertex $u\notin \{x_1,x_2,\dots, x_{k-3}\}$.
Then there exist a set $W'$ and a bijection $\varphi: W\rightarrow W'$ such that $N(\varphi(w))\cap W=\{w\}$ and $N(w)\cap W'=\{\varphi(w)\}$ hold for each $w\in W$.
Moreover, if $|W|\ge3$, then $W$ is an independent set in $G$, and $W'\cap W=\emptyset$.
\end{lem}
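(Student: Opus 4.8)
The plan is to exploit the $k$-criticality of $G$ by deleting, one at a time, the edges joining $u$ to the vertices of $W$ (these are indeed edges, since $W\subseteq N(u)$, and note $u\notin W$). Fix $w\in W$ and let $c_w$ be a proper $(k-1)$-coloring of $G-uw$. Since $\chi(G)=k$, we must have $c_w(u)=c_w(w)$; call this color $\gamma_w$. As $G[\{x_1,\dots,x_{k-3}\}]$ is a clique, the set $X:=\{x_1,\dots,x_{k-3}\}$ receives $k-3$ distinct colors under $c_w$, all of them different from $\gamma_w$ (each $x_i$ is adjacent to $w$); hence there is a unique color $\theta_w$ lying outside $c_w(X)\cup\{\gamma_w\}$. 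Now every $w^\ast\in W\setminus\{w\}$ is adjacent to all of $X$ and to $u$, and all these edges survive in $G-uw$, so $c_w(w^\ast)\notin c_w(X)\cup\{\gamma_w\}$, which forces $c_w(w^\ast)=\theta_w$.

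Next I would invoke the standard Kempe-chain argument: were $u$ and $w$ in different components of the subgraph of $G-uw$ spanned by the two colors $\gamma_w$ and $\theta_w$, interchanging these colors on the component of $w$ would produce a proper $(k-1)$-coloring of all of $G$, a contradiction. Hence $u$ and $w$ lie in a common such component, which is connected on at least two vertices, so $w$ has a neighbor $\varphi(w)$ inside it; necessarily $c_w(\varphi(w))=\theta_w$ and $\varphi(w)\notin\{u,w\}$. Put $W':=\{\varphi(w):w\in W\}$.

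It then remains to verify the two incidence conditions. For $N(\varphi(w))\cap W=\{w\}$: if some $w^\ast\in W\setminus\{w\}$ were adjacent to $\varphi(w)$, then $c_w(w^\ast)=\theta_w=c_w(\varphi(w))$ while the edge $w^\ast\varphi(w)$ lies in $G-uw$ (it is not $uw$, since neither of its endpoints is $u$), contradicting that $c_w$ is proper; as $\varphi(w)\sim w$, this proves the claim, and it immediately makes $\varphi$ injective, so $\varphi: W\to W'$ is a bijection. For $N(w)\cap W'=\{\varphi(w)\}$: if $\varphi(w^\ast)\in N(w)$ for some $w^\ast\neq w$, then in the coloring $c_{w^\ast}$ both $w$ (being a vertex of $W\setminus\{w^\ast\}$) and $\varphi(w^\ast)$ get color $\theta_{w^\ast}$, while $w\varphi(w^\ast)$ is an edge of $G$ not equal to $uw^\ast$, hence an edge of $G-uw^\ast$ — a contradiction.

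Finally, for the \emph{moreover} part assume $|W|\ge 3$. Given distinct $w_1,w_2\in W$, pick a third vertex $w\in W$; under $c_w$ both $w_1$ and $w_2$ receive color $\theta_w$ and are therefore non-adjacent, so $W$ is an independent set. If some $\varphi(w_0)$ lay in $W$, it would be a vertex of $W$ adjacent to $w_0\in W$ and distinct from $w_0$, contradicting this independence; hence $W'\cap W=\emptyset$. The point that requires care is that $\varphi(w)$ is read off from a coloring $c_w$ tailored to $w$, while the cross condition $N(w)\cap W'=\{\varphi(w)\}$ must nonetheless hold for all $w$; this works precisely because the single distinguished color $\theta_{w^\ast}$ of $c_{w^\ast}$ simultaneously carries $\varphi(w^\ast)$ together with every vertex of $W\setminus\{w^\ast\}$, and one must consistently check, whenever an edge is invoked, that it is not the deleted edge $uw$ (respectively $uw^\ast$), which is guaranteed because $u\notin W$ and $\varphi$ never takes the value $u$.
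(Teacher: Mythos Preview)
Your proof is correct and follows essentially the same approach as the paper's: delete $uw$, observe that in the resulting $(k-1)$-coloring all of $W\setminus\{w\}$ falls into the single leftover color class, and choose $\varphi(w)$ as a neighbor of $w$ in that class. The only cosmetic differences are that you use a Kempe-chain argument where the paper simply notes that $w$ could otherwise be recolored into the leftover class, and you verify $N(w)\cap W'=\{\varphi(w)\}$ directly via the coloring $c_{w^\ast}$ rather than by contrapositive from $N(\varphi(w'))\cap W=\{w'\}$.
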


\begin{proof}
For each vertex $w\in W$, by deleting the edge $uw$ from the $k$-critical graph $G$, we can get a $(k-1)$-chromatic graph $G'$.
We denote the color classes of $G'$ by $\mathcal{C}_1,\mathcal{C}_2,\dots,\mathcal{C}_{k-1}$.
It is easy to see the vertices $u$ and $w$ are in the same color class.
Since $G[\{x_1,x_2,\dots, x_{k-3},w\}]$ is a $(k-2)$-clique, we can assume $x_1\in\mathcal{C}_1$, $x_2\in\mathcal{C}_2$,\dots, $x_{k-3}\in\mathcal{C}_{k-3}$, and $u,w\in \mathcal{C}_{k-2}$.
The fact $W\subseteq N(x_1)\cap \dots\cap N(x_{k-3})\cap N(u)$ tells us that the set $W\backslash \{w\}$ (if not empty) must be contained in $\mathcal{C}_{k-1}$, and thus $W\backslash \{w\}$ is an independent set in $G$.
We claim $N(w)\cap \mathcal{C}_{k-1}$ must contain a vertex, say $\varphi(w)$.
Since otherwise $\mathcal{C}_{1}, \dots,\mathcal{C}_{k-3},\mathcal{C}_{k-2}-\{w\},\mathcal{C}_{k-1}\cup\{w\}$ can be a $(k-1)$-coloring of $G$, which contradicts the fact that $G$ is $k$-critical.
Besides, $\{\varphi(w)\}\cup(W\backslash \{w\})\subseteq \mathcal{C}_{k-1}$ tells us that $N(\varphi(w))\cap W=\{w\}$.
Now we define $W':=\{\varphi(w):w\in W\}$.
As we have shown that $N(\varphi(w))\cap W=\{w\}$ holds for each $w\in W$, it is easy to see $|W'|=|W|$, $\varphi:W\rightarrow W'$ is a bijection, and $N(w)\cap W'=\{\varphi(w)\}$  holds for each $w\in W$.
		
Moreover, if $|W|\ge3$, then $W$ is an independent set in $G$ (since $W\backslash \{v\}$ is an independent set in $G$ for each vertex $v\in W$).
By the fact that the edges between $W'$ and $W$ precisely form a matching, we can see $W'\cap W=\emptyset$ in this case.
\end{proof}

It would be very interesting to see if this lemma (or its proof) can be extended further.
	
\section{The general case: $k$-critical}
Providing a simple and new proof of the stability for the Tur\'an number ex$(n,K_{r+1})$, F{\"u}redi \cite{F} showed that
if an $n$-vertex graph $G$ is $K_{r+1}$-free and has at least $e(T_{r}(n))-t$ edges where $0\le t<e(T_{r}(n))<n^2$, then there exists a partition $V_1,\dots, V_r$ of $V(G)$ such that $\sum_{i=1}^r e(G[V_i])\le t$.
The proof of \cite{F} (see Corollary 3) also indicates that if the complete $r$-chromatic graph with color classes $V_1,\dots,V_r$ is denoted by $K$,
then $|E(K)\backslash E(G)|\le 2t$ and moreover, $\sum_{i=1}^r \left(|V_i|-n/r\right)^2<4t+o(n^2).$
We summarize in the following lemma.
	
\begin{lem}[F{\"u}redi \cite{F}]\label{stability}
Suppose that $G$ is an $n$-vertex $K_{r+1}$-free graph with $e(G)\ge e(T_{r}(n))-t$ where $0\le t<e(T_{r}(n))<n^2$.
Then there exists a complete $r$-chromatic graph $K:=K(V_1,\dots,V_r)$ with $V(K)=V(G)$ such that
$$|E(K)\backslash E(G)|\le 2t,$$
and $$\sum_{i=1}^r\left(|V_i|-\frac{n}{r}\right)^2<4t+o(n^2).$$
\end{lem}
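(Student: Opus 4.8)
The plan is to take the partition $V_1,\dots,V_r$ straight from F\"uredi's stability theorem (quoted in the paragraph preceding the lemma) and then to deduce the two displayed estimates by elementary edge- and vertex-counting. So, since $G$ is $K_{r+1}$-free with $e(G)\ge e(T_r(n))-t$, \cite{F} supplies a partition $V(G)=V_1\cup\cdots\cup V_r$ with $\sum_{i=1}^r e(G[V_i])\le t$; I would fix this partition and let $K:=K(V_1,\dots,V_r)$ be the associated complete $r$-chromatic graph, so that $E(K)$ is exactly the set of pairs meeting two distinct parts. Note $|E(K)|\le e(T_r(n))$, since among all complete $r$-partite graphs on $n$ vertices the Tur\'an graph $T_r(n)$ has the most edges.

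Next, write $e_0$ for the number of edges of $G$ joining distinct parts. Since $E(K)\cap E(G)$ is precisely this set of cross edges, and $e_0=e(G)-\sum_i e(G[V_i])\ge\bigl(e(T_r(n))-t\bigr)-t=e(T_r(n))-2t$, we get $|E(K)\setminus E(G)|=|E(K)|-e_0\le e(T_r(n))-\bigl(e(T_r(n))-2t\bigr)=2t$, which is the first inequality.

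For the balance estimate I would combine $|E(K)|\ge e_0\ge e(T_r(n))-2t$ with the identity $|E(K)|=\binom n2-\sum_i\binom{|V_i|}2$ to obtain $\sum_i\binom{|V_i|}2\le\binom n2-e(T_r(n))+2t=\sum_i\binom{n_i}2+2t$, where $n_1,\dots,n_r$ denote the part sizes of $T_r(n)$. For any partition of $n$ into $r$ parts of sizes $m_i$ one has $\sum_i\binom{m_i}2=\tfrac12\sum_i m_i^2-\tfrac n2=\tfrac12\bigl(n^2/r+\sum_i(m_i-n/r)^2\bigr)-\tfrac n2$ (the cross term vanishes because $\sum_i m_i=n$); applying this to the $|V_i|$ and to the $n_i$ and subtracting gives $\tfrac12\sum_i(|V_i|-n/r)^2\le 2t+\tfrac12\sum_i(n_i-n/r)^2$. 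Since $|n_i-n/r|<1$ for every $i$, the last sum is $O(1)$, so $\sum_i(|V_i|-n/r)^2\le 4t+O(1)<4t+o(n^2)$, as required.

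I do not expect a real obstacle once F\"uredi's partition theorem is invoked; the only points needing care are keeping the directions of the inequalities straight and absorbing the rounding gap between $e(T_r(n))$ and the idealised value $r\binom{n/r}{2}=\tfrac12(n^2/r-n)$ into the error term. If one instead wants a self-contained argument, the partition itself can be produced by F\"uredi's induction on $r$: take a vertex $v$ of maximum degree $d$, observe that $G[N(v)]$ is $K_r$-free, apply the inductive hypothesis to split $N(v)$ into $r-1$ parts, set $V_r:=V(G)\setminus N(v)$, and combine the bound $2e(G[V_r])+e(G[N(v),V_r])\le d(n-d)$ (valid since every vertex of $V_r$ has degree at most $d$) with $e(T_{r-1}(d))+d(n-d)\le e(T_r(n))$ to verify $\sum_{i=1}^r e(G[V_i])\le e(T_r(n))-e(G)\le t$; the two displayed bounds then follow exactly as above.
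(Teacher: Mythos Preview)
Your proposal is correct and matches the paper's approach: the paper does not give its own proof of this lemma but simply cites F\"uredi's partition result $\sum_i e(G[V_i])\le t$ and remarks that the two displayed bounds follow (referring to Corollary~3 of \cite{F}), which is precisely the derivation you have written out in detail. Your edge-counting for $|E(K)\setminus E(G)|\le 2t$ and the variance identity for the balance estimate are exactly the elementary steps the paper is alluding to.
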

	
\medskip

We are ready to use Lemmas \ref{key} and \ref{stability} to prove Theorem \ref{Thm-general}.

\begin{proof}[\bf{Proof of Theorem \ref{Thm-general}.}]
Fix $k\ge 4$ and let $C=\frac{1}{36(k-1)^2}$.
Let $G$ be a $k$-critical graph on $n$ vertices with $e(G)> e(T_{k-2}(n))-C n^2$.
In the rest of the proof, we will always assume that $n$ is large enough, and we denote $V(G)$ by $V$ for convenience.
The result in \cite{AZ} tells us the number of copies of $K_{k-1}$ in $G$ is at most $n$.
So by deleting at most $n$ edges in $G$, we obtain a spanning subgraph $G'$ which is $K_{k-1}$-free.
Obviously we have $e(G')> e(T_{k-2}(n))-(C n^2+n)$.
	
With the application of Lemma \ref{stability}, we get a partition $V_1,\dots,V_{k-2}$ of $V$ and a complete $(k-2)$-chromatic graph $K:=K(V_1,\dots,V_{k-2})$ such that $|E(K)\backslash E(G')|\le 2(C n^2+n)$ and
$$\Big||V_i|-\frac{n}{k-2}\Big|<\sqrt{4C n^2+o(n^2)}<\frac{n}{3(k-1)}+o(n) \mbox{ for each }1\le i\le k-2.$$
Without loss of generality, we assume $|V_1|\le\dots\le|V_{k-2}|$.
Thus $|V_{k-2}|\ge n/(k-2)$.
We call the edges in $E(K)\backslash E(G')$ as {\it missing edges}.
And the number of missing edges incident to the vertex $v$ in $K$ is called the {\it missing degree} of $v$.
For each $1\le i\le k-2$, we define $B_i$ to be the set of $\left\lceil\frac{n}{3(k-1)}\right\rceil$ vertices in $V_i$ satisfying that there exists some $m_i$ such that the missing degree of any vertex in $B_i$ is at least $m_i$, and the missing degree of any vertex in $U_i:=V_i-B_i$ is at most $m_i$.
Since there are at most $ 2(C n^2+n)$ missing edges in total, we have
$\sum_{i=1}^{k-2} m_i |B_i| <4(C n^2+n)$, and thus we can get
$$\sum_{i=1}^{k-2} m_i <4(C n^2+n)\Big/\left\lceil\frac{n}{3(k-1)}\right\rceil\le\frac{n}{3(k-1)}+12(k-1).$$
And we can check that for each $1\le i\le k-2$, we have
\begin{align}\label{*}
|U_i|=|V_i|-|B_i|>n/(k-2)-\frac{n}{3(k-1)}-\frac{n}{3(k-1)}-o(n)>\frac{n}{3(k-2)}\ge \sum_{i=1}^{k-2} m_i+\Theta(n).
\end{align}
	
Fix an arbitrary vertex $x_0\in U_{k-2}$ and let $Y:=N_{G'}(x_0)\backslash V_{k-2}$.
It is clear that $$|Y|\ge n-|V_{k-2}|-m_{k-2}.$$
We can find a copy of $K_{k-3}$ in $G'$ on vertices $x_1,x_2,\dots,x_{k-3}$ with $x_i\in U_i\cap Y=U_i\cap N_{G'}(x_0)$ by greedily choosing the vertex $x_i\in U_i\cap N_{G'}(x_0)\cap\dots\cap N_{G'}(x_{i-1})$ for $1\le i\le k-3$ one by one since (\ref{*}) holds for each $1\le i\le k-3$.
Then, since $|U_i|-m_{k-2}\ge|U_i|-\sum_{i=1}^{k-2} m_j>k-2$ holds for each $1\le i\le k-3$ by (\ref{*}), we can find a vertex $u\in U_{i_0}\cap Y$ distinct from $x_1,x_2,\dots,x_{k-3}$, where we choose $i_0$ such that $m_{i_0}=\min\{m_1,\dots,m_{k-3}\}$.
Let $W:=N_{G'}(x_1)\cap\dots\cap N_{G'}(x_{k-3})\cap N_{G'}(u)\cap V_{k-2}$.
We can see $W\ni x_0$, $W\cap Y=\emptyset$, and $$|W|\ge|V_{k-2}|-\sum_{i=1}^{k-3} m_j-m_{i_0}\ge |V_{k-2}|-\left(1+\frac{1}{k-3}\right)\sum_{i=1}^{k-3} m_j.$$
Then by using Lemma \ref{key}, we get a set $W'$ with $|W'|=|W|$ such that $|N_G(w)\cap W'|=1$ for each $w\in W'$, and $|W'\cap W|\le 2$.
Note that all vertices in $Y$ are adjacent to the vertex $x_0\in W$ in $G'\subseteq G$,
so we can see $|W'\cap Y|\le 1$.
	
As $W\cap Y=\emptyset$, $|W'\cap W|\le 2$, $|W'\cap Y|\le1$ and $|W'|=|W|$, we get $n\ge |W\cup Y\cup W'|\ge 2|W|+|Y|-3$.
Thus
$$2|W|+|Y|\le n+3.$$
But on the other hand, we can check that
\begin{align*}
	2|W|+|Y|&\ge 2\left(|V_{k-2}|-\left(1+\frac{1}{k-3}\right)\sum_{j=1}^{k-3} m_j\right)+ \left(n-|V_{k-2}|-m_{k-2}\right)\\
	&\ge n+|V_{k-2}|-2\left(1+\frac{1}{k-3}\right)\sum_{j=1}^{k-2} m_j\\
	&\ge n+\frac{n}{k-2}-2\left(1+\frac{1}{k-3}\right)\left(\frac{n}{3(k-1)}+12(k-1)\right)>n+3.
\end{align*}
This derives a contradiction.
So we have $f_k(n)\le e(T_{k-2}(n))-C n^2$ for $n$ sufficiently large.
\end{proof}

We would like to remark that the above proof relies on the existence of $K_{k-2}$.
(Recall that in Lemma \ref{key}, $G[\{w,x_1,x_2,\dots,x_{k-3}\}]$ forms a copy of $K_{k-2}$ for each vertex $w\in W$.)
So using this approach, we will not be able to improve the upper bound to the following $$e(G)\leq \ex(n,K_{k-2})=e(T_{k-3}(n))\leq e(T_{k-2}(n))-\frac{n^2}{2(k-2)(k-3)};$$
that says, we are not able to obtain a constant $c_k$ better than the order of magnitude $k^{-2}$.

\section{The 4-critical case}
In this section we consider 4-critical graphs and prove Theorem \ref{Thm-4critical}.

Before presenting the proof of Theorem \ref{Thm-4critical},
we like to give a short proof of a slightly weaker bound (see Theorem \ref{Thm-4critical'})
than Theorem \ref{Thm-4critical} to illustrate the proof ideas.
In doing this, we study certain local structure based on 2-paths (i.e., a path of length two) in the proof of Theorem \ref{Thm-4critical'},
while we consider 4-cycles (i.e., a cycle of length four) in replace of 2-paths in the proof of Theorem \ref{Thm-4critical}.

\subsection{A weaker upper bound}
We first show the following result.
\begin{thm}\label{Thm-4critical'}
For any integer $n\geq 4$, it holds that $f_4(n)<\frac{1}{6} n^2+10 n\leq 0.167n^2+10n$.
\end{thm}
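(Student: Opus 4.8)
The plan is to mimic the strategy of the general case proof, specializing to $k=4$ so that the "clique" $K_{k-3}$ degenerates to a single vertex and Lemma~\ref{key} becomes a statement about common neighborhoods of two vertices (a $2$-path). Suppose for contradiction that $G$ is a $4$-critical graph on $n$ vertices with $e(G)\ge \frac16 n^2 + 10n$. Since $G$ is $K_4$-free (as $n>4$), I would apply Tur\'an-type stability, but with a weaker bound it suffices to argue more directly: $\frac16 n^2 + 10n$ is well above $e(T_2(n)) = \lfloor n^2/4\rfloor$ would be false, so instead I should note $\frac16 < \frac14$ and the relevant comparison is with $e(T_2(n))$; actually the point is that a $K_4$-free graph with this many edges is dense enough to contain many $2$-paths whose endpoints have large common neighborhood. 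The cleanest route: pick a vertex $x_0$ of degree at least $d(G) \ge \frac13 n + 20$, and look at a neighbor; more robustly, use that $G$ has average degree $\ge \frac13 n$, find two adjacent vertices (or a $2$-path) $x_1, u$ with $|N(x_1)\cap N(u)|$ large by a counting/convexity argument on codegrees.

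The key steps in order: (1) From $e(G)\ge \frac16 n^2 + 10n$ and $K_4$-freeness, deduce by convexity (counting pairs $(v,\{a,b\})$ with $a,b\in N(v)$, i.e., counting $2$-paths, which is $\sum_v \binom{d(v)}2 \ge n\binom{2e/n}{2}$) that some pair of vertices $\{x_1,u\}$ has codegree $|N(x_1)\cap N(u)| =: |W|$ with $|W| \ge \frac13 n - o(n)$; here I want $x_1 u \in E(G)$ or at least the configuration of Lemma~\ref{key} with $k=4$, which needs $W \subseteq N(x_1)\cap N(u)$ — so I should take $x_1$ playing the role of the single "clique vertex" and $u$ the extra vertex, with the $K_{k-3}=K_1$ condition vacuous. (2) Apply Lemma~\ref{key} with $k=4$: since $|W|\ge 3$, $W$ is independent and there is $W'$ disjoint from $W$ with $|W'|=|W|$ and the edges between $W$ and $W'$ forming a perfect matching, each $w\in W$ having exactly one neighbor $\varphi(w)\in W'$. (3) Now count: $|W'| = |W|$, $W\cap W' = \emptyset$, so $|W\cup W'| = 2|W|$, giving $n \ge 2|W| \ge \frac23 n - o(n)$ — but this is not yet a contradiction, so I need a sharper lower bound on $|W|$, pushing $|W| > n/2$ roughly, or I need to also incorporate the neighborhoods of $x_1$ and $u$ (every vertex of $W$ is adjacent to $x_1$, so no vertex of $W'$ outside a bounded set lies in $N(x_1)$ — wait, $W'$ vertices could be in $N(x_1)$). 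Let me reconsider: in the general proof they got $2|W| + |Y| \le n+3$ where $Y = N(x_0)\setminus V_{k-2}$; the analogous move here requires setting up $Y$ as a second large neighborhood-ish set disjoint-ish from $W$ and $W'$. So I would instead choose the $2$-path more carefully — pick $x_1$ of large degree, let $Y = N(x_1)$, pick $u \in Y$ with $W := N(x_1)\cap N(u)$ still large (by convexity restricted to $Y$), note $W\subseteq Y$ but $W'$ avoids $W$ and $W'\cap Y$ is small because... hmm, $W'$ need not avoid $Y$.

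The honest approach that matches the paper's "$2$-paths" hint: take $u$ and $x_1$ adjacent with both of high degree, $W = N(u)\cap N(x_1)$; every vertex of $W$ is adjacent to both $u$ and $x_1$, and $W$ is independent (if $|W|\ge 3$). The matching partner set $W'$ satisfies $N(w)\cap W' = \{\varphi(w)\}$, so in particular $u, x_1 \notin W'$ (they're adjacent to all of $W$, which is $\ge 3$ vertices). Then $V(G) \supseteq W \cup W' \cup \{u, x_1\}$ with $W, W'$ disjoint of equal size, so $n \ge 2|W| + 2$. To beat $n/4$-type bounds I need $|W| > n/4$, which a $K_4$-free graph with $\frac16 n^2$ edges does supply on average-degree grounds if I'm slightly cleverer — but $n \ge 2|W|$ only gives $|W|\le n/2$, consistent. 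The real contradiction must come from a double-counting that bounds $|W|$ from below by something exceeding $n/2 - O(1)$. I expect I need: among vertices of $W$ and their partners, plus $N(x_1) \setminus W$, fit inside $V(G)$; since $|N(x_1)\setminus W| = d(x_1) - |W|$, get $n \ge |W| + |W'| + |N(x_1)\setminus W| - (\text{overlap of } W' \text{ with } N(x_1))$. Bounding the overlap $|W' \cap N(x_1)|$ is the main obstacle: each such vertex is some $\varphi(w)$ with $w$ adjacent to $x_1$ — true for all $w\in W$ — so this overlap is not obviously small, and I will need an additional structural observation (perhaps: if $\varphi(w)\in N(x_1)$ then $\varphi(w)$ together with $x_1$ and $w$... no, $\varphi(w)w$ and $x_1 w$ and $x_1\varphi(w)$ would be a triangle, which is allowed). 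So instead I anticipate the argument bounds $|W'\cap N(x_1)|$ by re-running Lemma~\ref{key} or by noting $W' \cap N(x_1) \cap N(u)$ would lie in $W$, forcing $\varphi(w) = w$, impossible since $W\cap W'=\emptyset$ — hence $W' \cap N(x_1) \cap N(u) = \emptyset$, i.e. $W' \cap W_{\text{new}} = \emptyset$ where $W_{\text{new}}$ is recomputed. That still leaves $W'$ meeting $N(x_1)\setminus N(u)$. The cleanest fix, and the main obstacle to nail down, is to choose $x_1, u$ and a third "probe" vertex so that three large sets ($W$, $W'$, and a neighborhood remnant) are pairwise almost-disjoint, yielding $\ge 2|W| + (\text{remnant}) > n$; I would carry this out by a careful averaging that simultaneously makes $d(x_1)$, $d(u)$, and $|W|$ all close to $\frac13 n$, so that $2|W| + (d(x_1) - |W|) = |W| + d(x_1) \approx \frac23 n$ still isn't enough — confirming that the weaker bound $\frac16 n^2 + 10n$ is exactly calibrated so that one needs $|W| \ge (\frac12 + \epsilon)n$, achievable only by upgrading the codegree estimate using $K_4$-freeness more aggressively (Kruskal–Katona / the fact that $N(x_1)$ is an independent-ish set forces high codegrees). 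That final calibration — extracting $|W| > n/2 - O(1)$ from $e(G) > \frac16 n^2$ via $K_4$-freeness and convexity over $2$-paths — is the step I expect to be most delicate, and is where the constant $\frac16$ comes from.
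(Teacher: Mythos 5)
Your proposal correctly identifies the raw ingredients (2-paths, applying Lemma~\ref{key} with $k=4$ to the common neighborhood $W=N(x)\cap N(z)$, and the need to pack $W$, its matching partner $W'$, and neighborhoods into $V(G)$), but it stalls at exactly the point where the paper's argument closes, and the gap is real. You explicitly observed that $\varphi(w)\in N(x)$ forces a triangle $x\,w\,\varphi(w)$ and then dismissed this with ``which is allowed'' --- but that observation \emph{is} the missing step. It gives $|W'\cap N(x)|\le 2t(x)$ and $|W'\cap N(z)|\le 2t(z)$ (and $|W'\cap N(y)|\le 1$ since $y\in W$), and these triangle counts are not obstacles but small correction terms, because a $4$-critical graph has at most $n$ triangles (Stiebitz~\cite{S}). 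Chasing the inclusion $N(x)\cup N(y)\cup N(z)\cup W'\subseteq V(G)$ then yields, for \emph{every} 2-path $xyz$, the clean structural inequality $d(x)+d(y)+d(z)-3t(x)-3t(z)\le n+1$. Your search for a scheme that makes $|W|>n/2$, or for a ``third probe vertex,'' or for a Kruskal--Katona upgrade of the codegree estimate, is heading away from this; none of those is needed, and you never reach a contradiction.

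The other half you did not reconstruct is the averaging that produces the constant $\tfrac16$. The paper does not try to find a single pair with huge codegree. Instead it averages the \emph{same} quantity $d(x)+d(y)+d(z)-3t(x)-3t(z)$ over a carefully chosen multiset of 2-paths (roughly $d(v)$ cherries per center $v$), giving $\sum d(v)^2\ge 4e(G)^2/n$ on the degree side and $\sum d(u)t(u)\le n\cdot 3t(G)\le 3n^2$ on the triangle side; this yields a 2-path with $d(x)+d(y)+d(z)-3t(x)-3t(z)\ge \frac{6e(G)}{n}-\frac{9n^2}{e(G)}$. Matching against the $\le n+1$ upper bound gives $e(G)<\frac{n^2}{6}+10n$. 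So the constant $\tfrac16$ comes from the $\frac{6e(G)}{n}$ term in this averaging, not from forcing $|W|$ past $n/2$; your diagnosis of ``where the constant comes from'' was off, and the proposal as written would not compile into a proof without the triangle-control idea you set aside.
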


We also need two lemmas as follows.
For a graph $G$, we denote $t(G)$ to be the number of triangles in $G$.
For a vertex $v$, let $t_G(v)$ be the number of triangles containing the vertex $v$ in $G$. When $G$ is clear, we often drop the subscript.

\begin{lem}\label{lem1}
Suppose $G$ has at most $n$ triangles and minimum degree at least 3. Then $G$ contains a 2-path $xyz$ such that $$d(x)+d(y)+d(z)-3t(x)-3t(z)\ge\frac{6e(G)}{n}-\frac{9n^2}{e(G)}.$$
\end{lem}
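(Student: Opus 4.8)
The plan is to produce the required $2$-path by a \emph{weighted} averaging argument over all $2$-paths of $G$. Write every $2$-path uniquely as $xyz$ with center $y$ and (unordered) endpoints $x,z$; since $\delta(G)\ge 3$ we have $d(y)\ge 3$, so we may attach to it the positive weight $w(xyz):=\frac{1}{d(y)-1}$. As there are $\binom{d(y)}{2}$ $2$-paths centered at $y$, the total weight is
$$W:=\sum_{xyz}w(xyz)=\sum_{y\in V(G)}\binom{d(y)}{2}\cdot\frac{1}{d(y)-1}=\sum_{y\in V(G)}\frac{d(y)}{2}=e(G)>0.$$
Hence it suffices to show that the $w$-weighted average of $d(x)+d(y)+d(z)-3t(x)-3t(z)$ over all $2$-paths is at least $\frac{6e(G)}{n}-\frac{9n^2}{e(G)}$, since then some single $2$-path attains that bound.

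Next I would evaluate the weighted sum term by term via a routine double count. The center term contributes $\sum_{xyz}w(xyz)\,d(y)=\sum_{y}\binom{d(y)}{2}\cdot\frac{d(y)}{d(y)-1}=\tfrac12\sum_{v}d(v)^2$. For the endpoint terms, note that for a fixed vertex $v$ the $2$-paths having $v$ as an endpoint are exactly the walks $v\,y\,z$ with $y\in N(v)$ and $z\in N(y)\setminus\{v\}$, so their total weight is $\sum_{y\in N(v)}(d(y)-1)\cdot\frac{1}{d(y)-1}=d(v)$. Therefore $\sum_{xyz}w(xyz)(d(x)+d(z))=\sum_{v}d(v)^2$, and by the identical computation with $d$ replaced by $t$, $\sum_{xyz}w(xyz)(3t(x)+3t(z))=3\sum_{v}t(v)\,d(v)$. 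Combining, the weighted average we must bound equals
$$\frac{1}{e(G)}\left(\frac32\sum_{v}d(v)^2-3\sum_{v}t(v)\,d(v)\right).$$

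To finish I would insert two estimates. Cauchy--Schwarz gives $\sum_v d(v)^2\ge\frac{(2e(G))^2}{n}=\frac{4e(G)^2}{n}$. For the triangle sum, reindexing by triangles gives $\sum_v t(v)\,d(v)=\sum_{T}\sum_{v\in T}d(v)$, and since $G$ has at most $n$ triangles while every vertex has degree less than $n$, this is at most $n\cdot 3n=3n^2$. Substituting, the weighted average is at least $\frac{1}{e(G)}\big(\tfrac32\cdot\tfrac{4e(G)^2}{n}-9n^2\big)=\frac{6e(G)}{n}-\frac{9n^2}{e(G)}$, which is precisely the claimed bound.

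The one genuinely decisive step — and where I expect all the difficulty to sit — is the choice of weight: taking $w(xyz)=\frac{1}{d(y)-1}$ is exactly what makes the endpoint contributions collapse to the tractable quantities $\sum_v d(v)^2$ and $\sum_v t(v)\,d(v)$ (any other normalization leaves the intractable sum of neighbour-degrees $\sum_{y\in N(v)}d(y)$ behind), and it simultaneously forces $W=e(G)$, which is what pairs with the factor $\frac{6e(G)}{n}$. Everything after that is a double count plus Cauchy--Schwarz; the only input used beyond the stated hypotheses is that $G$ has at most $n$ vertices, so that $d(v)<n$, which is the case in the intended application where $G$ is the $n$-vertex $4$-critical graph.
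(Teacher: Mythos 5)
Your proof is correct, and the final identity
$\frac{1}{e(G)}\bigl(\tfrac32\sum_v d(v)^2-3\sum_v t(v)d(v)\bigr)\ge \frac{6e(G)}{n}-\frac{9n^2}{e(G)}$
is exactly what the paper also reduces to, but you reach it by a genuinely different device. The paper does not average over all $2$-paths with weights; instead, for each vertex $v$ with neighbors $v_1,\dots,v_t$ it \emph{selects} the cyclic subfamily $\mathcal{P}_v=\{v_1vv_2,\,v_2vv_3,\dots,v_tvv_1\}$ of exactly $d(v)$ paths centered at $v$, takes $\mathcal{P}=\bigcup_v\mathcal{P}_v$ (so $|\mathcal{P}|=2e(G)$, and every neighbor of $v$ is an endpoint of exactly two paths in $\mathcal{P}_v$), and then does a plain uniform average over this curated family. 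Your weight $w(xyz)=\frac{1}{d(y)-1}$ on \emph{all} $2$-paths is precisely the ``expected'' version of that selection (a uniformly random cyclic ordering of $N(y)$ places a given pair of neighbors adjacently with probability $\frac{2}{d(y)-1}$), so the two computations are siblings; the extra factor of $2$ between your $W=e(G)$ and the paper's $|\mathcal{P}|=2e(G)$ cancels in the average. Each route has a small advantage: the paper's construction is concrete and hypothesis-light, while your weighting makes the per-vertex bookkeeping collapse in one line to $\sum_v d(v)^2$ and $\sum_v t(v)d(v)$ without ever writing down the auxiliary family, and it works verbatim down to minimum degree $2$. One cosmetic remark: like the paper, you rely on $G$ having at most $n$ vertices (for $d(v)<n$ and for the Cauchy--Schwarz denominator); this is not literally in the lemma's hypotheses but is how the lemma is applied, and you flag it correctly.
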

\begin{proof}
For some vertex $v\in V(G)$, write $N(v)=\{v_1,v_2,\dots,v_t\}$ for some $t\geq 3$.
Let $$\mathcal{P}_v:=\{v_1 v v_2,\dots, v_{t-1} v v_{t},v_{t} v v_1\}$$ be a family of 2-paths with center $v$.
We have $|\mathcal{P}_v|=d(v)$,
and $$\sum_{xyz\in\mathcal{P}_v} \left(d(x)+d(y)+d(z)\right)={d(v)}^2+2\sum_{u\in N(v)} d(u),$$
$$\sum_{xyz\in\mathcal{P}_v} \left(t(x)+t(z)\right)=2\sum_{u\in N(v)} t(u).$$
Then let $\mathcal{P}:=\bigcup_{v\in V(G)}\mathcal{P}_v$.
We have $$|\mathcal{P}|=\sum_{v\in V(G)}d(v)=2e(G).$$
Using Jensen's inequality, we get
\begin{align*}
\sum_{xyz\in\mathcal{P}} (d(x)+d(y)+d(z))
&=\sum_{v\in V(G)}{d(v)}^2+2\sum_{v\in V(G),u\in N(v)} d(u)=\sum_{v\in V(G)}{d(v)}^2+2\sum_{u\in V(G),v\in N(u)} d(u)\\
&=\sum_{v\in V(G)}{d(v)}^2+2\sum_{u\in V(G)}{d(u)}^2=3\sum_{v\in V(G)}{d(v)}^2\ge 12 e(G)^2/n.
\end{align*}
Since every vertex in $G$ has degree at most $n-1$ and $\sum_{u\in V(G)} t(u)=3t(G)\le 3n$, we get
$$\sum_{xyz\in\mathcal{P}} (t(x)+t(z))=2\sum_{v\in V(G)} \sum_{u\in N(v)} t(u)=2\sum_{u\in V(G)}d(u)t(u)
\le 2n\sum_{u\in V(G)} t(u)
\le 6n^2.$$
So by picking a 2-path $xyz$ in $\mathcal{P}$ uniformly and randomly, we   see
$$\mathbb{E}[d(x)+d(y)+d(z)-3t(x)-3t(z)]\ge \frac{12 e(G)^2/n-18n^2}{|\mathcal{P}|}=
\frac{6e(G)}{n}-\frac{9n^2}{e(G)}.$$
Thus we can find a 2-path $xyz$ as desired.
\end{proof}
	
\begin{lem}\label{lem2}
For any 2-path $xyz$ in a 4-critical graph $G$, we have $$d(x)+d(y)+d(z)-3t(x)-3t(z)\le n+1.$$
\end{lem}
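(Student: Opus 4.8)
The plan is to sort the vertices of $V(G)\setminus\{x,y,z\}$ by which members of $\{x,y,z\}$ they are adjacent to: write $A_3$ for the common neighbours of all three, write $A_2^{xy},A_2^{xz},A_2^{yz}$ for the vertices adjacent to exactly the indicated pair, and $A_1^y$ for the vertices adjacent to $y$ only. Counting $d(x)+d(y)+d(z)=\sum_{v}|N(v)\cap\{x,y,z\}|$ and discarding the nonnegative contribution from the vertices adjacent to none of $x,y,z$ gives, when $xz\notin E(G)$,
\[
d(x)+d(y)+d(z)\ \le\ n+1+|A_2^{xy}|+|A_2^{xz}|+|A_2^{yz}|+2|A_3|,
\]
with an extra additive $2$ on the right when $xz\in E(G)$. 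So the task is to find enough triangles through $x$ and through $z$ to pay for these error terms.

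If $xz\in E(G)$ this needs nothing beyond counting: each vertex of $N(x)\cap N(y)$ and each vertex of $N(x)\cap N(z)$ gives a triangle through $x$, the two families overlap only in the triangle $xyz$, and symmetrically for $z$; substituting $|N(x)\cap N(y)|=|A_2^{xy}|+|A_3|+1$, and the analogous expressions for the other two common neighbourhoods, one gets $3t(x)+3t(z)$ large enough that the left-hand side is in fact at most $n-3$.

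The substantive case is $xz\notin E(G)$. Here I would apply Lemma~\ref{key} with $k=4$, taking the $K_{k-3}$ to be the single vertex $x$, the vertex $u$ to be $z$, and $W:=N(x)\cap N(z)$ (so $y\in W$); this yields a set $W'$ and a bijection $\varphi\colon W\to W'$ whose defining property is that $w$ is the only neighbour of $\varphi(w)$ inside $W$. The decisive consequence is that for each $w\in A_2^{xz}$ (a common neighbour of $x,z$ not adjacent to $y$) we have $\varphi(w)\not\sim y$ and $\varphi(w)\ne y$, hence — since $x,z\in N(y)$ — also $\varphi(w)\notin\{x,z\}$. Split $A_2^{xz}$ according to whether $\varphi(w)$ lies in $N(x)$, in $N(z)\setminus N(x)$, or in neither, say into $a$, $b$, $c$ vertices. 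Vertices of the first kind produce triangles $xw\varphi(w)$ through $x$ and those of the second kind triangles $zw\varphi(w)$ through $z$; together with the triangles through the edges $xy$ and $yz$ this gives $t(x)\ge|A_2^{xy}|+|A_3|+a$ and $t(z)\ge|A_2^{yz}|+|A_3|+b$, once one checks these triangles are pairwise distinct (routine, the only subtlety being the harmless cases $|W|\le 2$, where $|A_2^{xz}|\le 1$). Meanwhile the $c$ vertices $\varphi(w)$ of the third kind avoid $N(x)\cup N(z)$, avoid $\{x,z\}$, and — by the displayed non-adjacency to $y$ — avoid $N(y)$; so $N(x)\cup N(z)$, $A_1^y$, $\{x,z\}$ and these $c$ vertices are pairwise disjoint in $V(G)$, whence $n\ge|N(x)\cup N(z)|+|A_1^y|+2+c$. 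Substituting $|N(x)\cup N(z)|=d(x)+d(z)-1-|A_3|-|A_2^{xz}|$, $|A_1^y|=d(y)-2-|A_2^{xy}|-|A_2^{yz}|-|A_3|$, and $|A_2^{xz}|-c=a+b$ turns this into $d(x)+d(y)+d(z)\le n+1+a+b+|A_2^{xy}|+|A_2^{yz}|+2|A_3|$, and subtracting $3t(x)+3t(z)$ leaves precisely $d(x)+d(y)+d(z)-3t(x)-3t(z)\le n+1$.

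The step I expect to be the main obstacle is extracting from the induced-matching conclusion of Lemma~\ref{key} the single fact that the endpoints $\varphi(w)$ with $w\in A_2^{xz}$ are non-adjacent to $y$: this one property simultaneously keeps them out of $A_1^y$ and out of $\{x,z\}$, so that the vertex count is tight, and it is exactly here that $4$-criticality is used (indeed the inequality fails for complete bipartite graphs). The remaining care is to ensure the matching triangles are not already among those through the edges $xy$ and $yz$ — which is why one should feed only $A_2^{xz}$, not all of $W$, into the matching analysis — and to note that the bound $n+1$ leaves no slack, so every estimate above must be made exactly.
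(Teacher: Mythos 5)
Your argument is correct and it rests on exactly the same application of Lemma~\ref{key} as the paper's proof: both take $W=N(x)\cap N(z)$ (with $x_1=x$, $u=z$) and exploit the resulting near-matching $W'$. The difference is purely in the bookkeeping. The paper avoids your case split on whether $xz\in E(G)$ and works with a single union bound: it estimates $|X\cup Y\cup Z|\ge d(x)+d(y)+d(z)-t(x)-t(z)-|W|$ via $|X\cap Y|\le t(x)$, $|Y\cap Z|\le t(z)$, then bounds the overlap of $W'$ with $X\cup Y\cup Z$ by $|W'\cap Y|\le 1$, $|W'\cap X|\le 2t(x)$, $|W'\cap Z|\le 2t(z)$, and concludes directly from $n\ge |X\cup Y\cup Z\cup W'|$. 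Your partition of $V\setminus\{x,y,z\}$ by adjacency pattern, the split of $A_2^{xz}$ into the $a,b,c$ classes, and the observation that $\varphi(w)\not\sim y$ for $w\ne y$ (equivalently, $|W'\cap N(y)|\le 1$) encode the same information; the paper's union-bound phrasing just collapses your several ledger lines into one inequality and needs no separate treatment of $xz\in E(G)$. Two small remarks: your closing claim that the bound ``leaves no slack'' is not quite right --- after substitution you actually get $n+1$ minus a nonnegative quantity, and indeed you only needed a coefficient $1$ rather than $3$ on $t(x)+t(z)$; and the care you take about the case $|W|\le 2$ (to ensure the matching triangles are distinct) is appropriate but, as you note, vacuous there since $|A_2^{xz}|\le 1$.
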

\begin{proof}
Let $X:=N(x)$, $Y:=N(y)$, $Z:=N(z)$, and $W:=X\cap Z$.
If $u\in X\cap Y$, $uxy$ is a triangle. So $|X\cap Y|\le t(x)$. Similarly, $|Z\cap Y|\le t(z)$.
Then we have
$$|X\cup Y\cup Z|\ge|X|+|Y|+|Z|-|X\cap Y|-|Z\cap Y|-|X\cap Z|\ge d(x)+d(y)+d(z)-t(x)-t(z)-|W|.$$
		
By Lemma \ref{key}, we can find a set $W'\subseteq V(G)$ and a bijection $\varphi: W\rightarrow W'$ such that $W'=\{\varphi(w):w\in W'\}$, and for each $w\in W$, we have both $N(\varphi(w))\cap W=\{w\}$ and $N(w)\cap W'=\{\varphi(w)\}$.
		
We consider the size of $W'\cap  (X\cup Y\cup Z)$.
Since both $N(\varphi(w))\cap W=\{w\}$ and $N(w)\cap W'=\{\varphi(w)\}$ hold for each $w\in W$, and we know $y\in W$, we can see $|W'\cap Y|\le|W'\cap N(y)|\le1$.
Suppose $v'\in W'\cap X$. There is a vertex $v\in W$ such that $vv'$ is an edge. Then we see $xvv'$ is a triangle.  So $|W'\cap X|\le 2t(x)$. Similarly,   $|W'\cap Z|\le 2t(z)$.  Totally, we have
$$|W'\cap  (X\cup Y\cup Z)|\le |W'\cap X|+|W'\cap Y|+|W'\cap Z|\le 2t(x)+2t(z)+1.$$
Finally, we get
\begin{align*}
n&\ge |X\cup Y\cup Z\cup W'|=|X\cup Y\cup Z\cup W'|+|W'|-|W'\cap  (X\cup Y\cup Z)|\\
&=\left(d(x)+d(y)+d(z)-t(x)-t(z)-|W|\right)+|W|-(2t(x)+2t(z)+1)\\
&=d(x)+d(y)+d(z)-3t(x)-3t(z)-1,
\end{align*}
completing the proof of this lemma.
\end{proof}
	
Now we can finish the proof of this subsection.
	
\begin{proof}[\bf Proof of Theorem \ref{Thm-4critical'}]
Let $G$ be an $n$-vertex 4-critical graph. It is easy to see that the minimum degree of $G$ is at least 3.
By the result in \cite{S}, $G$ contains at most $n$ copies of triangles.
Applying Lemma \ref{lem1}, we can get a 2-path $xyz$ with $$d(x)+d(y)+d(z)-3t(x)-3t(z)\ge\frac{6e(G)}{n}-\frac{9n^2}{e(G)}.$$
Together with Lemma \ref{lem2}, we have
$$\frac{6e(G)}{n}-\frac{9n^2}{e(G)}\le n+1.$$
This implies that $e(G)< n^2/6+10 n.$
\end{proof}
	
\subsection{The proof of Theorem \ref{Thm-4critical}}\label{4}
To show Theorem \ref{Thm-4critical}, we need some new lemmas.
The coming lemma can be easily obtained by averaging, which says that every graph contains an edge such that the sum of the degrees of its two endpoints is at least twice the average degree of the graph.
\begin{lem}\label{lem-edge}
Any graph $G$ contains an edge $xy$ such that $$d(x)+d(y)\ge 2d(G).$$
\end{lem}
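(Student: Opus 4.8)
The statement asserts that any graph $G$ has an edge $xy$ with $d(x)+d(y)\ge 2d(G)$, where $d(G)=\frac{2e(G)}{n}$ is the average degree. This is a standard averaging argument, so the plan is to set up the right weighted average over edges and invoke the pigeonhole principle. First I would dispose of the trivial case $e(G)=0$, where there is nothing to prove (if $G$ has no edges we may take the statement to be vacuous, or note $d(G)=0$). So assume $e(G)\ge 1$.

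The key step is to compute $\sum_{xy\in E(G)}\bigl(d(x)+d(y)\bigr)$. Each vertex $v$ contributes $d(v)$ to the sum once for each edge incident to it, i.e., exactly $d(v)$ times, so this sum equals $\sum_{v\in V(G)} d(v)^2$. By the Cauchy--Schwarz inequality (or the power-mean inequality),
\[
\sum_{v\in V(G)} d(v)^2 \ge \frac{\bigl(\sum_{v\in V(G)} d(v)\bigr)^2}{n} = \frac{(2e(G))^2}{n} = 2e(G)\cdot \frac{2e(G)}{n} = 2e(G)\cdot d(G).
\]
Therefore the average of $d(x)+d(y)$ over the $e(G)$ edges is at least $2d(G)$, and so some edge $xy$ attains a value $\ge 2d(G)$. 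This completes the proof.

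I do not anticipate any genuine obstacle here; the only things to be careful about are handling the edgeless case cleanly and making sure the double-counting identity $\sum_{xy\in E(G)}(d(x)+d(y))=\sum_v d(v)^2$ is stated correctly (each edge is counted once, and the endpoint $v$ appears in $d(v)$ of the summands). The Cauchy--Schwarz step could alternatively be phrased via Jensen's inequality applied to the convex function $x\mapsto x^2$, exactly as was done in the proof of Lemma~\ref{lem1}; either phrasing is routine.
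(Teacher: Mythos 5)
Your proof is correct and takes essentially the same approach as the paper: both double-count $\sum_{xy\in E}(d(x)+d(y))=\sum_v d(v)^2$, apply Jensen/Cauchy--Schwarz to bound this below by $n\,d(G)^2$, and divide by $|E|=n\,d(G)/2$ to conclude by averaging. Your explicit handling of the edgeless case is a sensible, if minor, addition.
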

\begin{proof}
By Jensen's inequality, we can get
$$\sum_{xy\in E}\left(d(x)+d(y)\right)=\sum_{v\in V}d(v)^2\ge n d(G)^2.$$
Note that $|E|=\left(n d(G)\right)/2$.
Thus there exists an edge $xy\in E$ such that
$$d(x)+d(y)\ge \frac{ n d(G)^2}{\left(n d(G)\right)/2}=2d(G),$$
proving the lemma.
\end{proof}
	
We now give the following lemma about 4-cycles, which can be viewed as a generalization of the previous lemma.
Recall the well-known result of Reiman \cite{R} that any $n$-vertex graph without containing 4-cycles has at most $\frac n 4 (1+\sqrt{4n-3})<n^{\frac32}$ edges.
	
\begin{lem}\label{lem-4cycle}
Any $n$-vertex graph $G$ with $e(G)>\frac n 4 (1+\sqrt{4n-3})$ contains a 4-cycle $v_1v_2v_3v_4$ satisfying that $$d(v_1)+d(v_2)+d(v_3)+d(v_4)\ge 4d(G)-O(n^{\frac34}).$$
\end{lem}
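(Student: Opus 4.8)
The plan is to generalize the averaging argument behind Lemma~\ref{lem-edge} from edges to $4$-cycles, but carried out over a \emph{structured} family of $4$-cycles rather than over all of them. First I would dispose of the degenerate range: if $d(G)<n^{3/4}$, then $4d(G)-O(n^{3/4})$ is already negative once the implied constant is taken to be at least $4$, while $G$ still contains a $4$-cycle since $e(G)>\tfrac n4(1+\sqrt{4n-3})$ and Reiman's theorem \cite{R} applies; any $4$-cycle has nonnegative degree-sum and hence satisfies the conclusion. So from now on I assume $d(G)\ge n^{3/4}$, in particular $e(G)=\tfrac12 nd(G)$ is large and $G$ still has a $4$-cycle. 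For each pair of vertices $\{u,w\}$ whose common neighbourhood $N(u)\cap N(w)=\{v_1,\dots,v_c\}$ has size $c=c(u,w)\ge 2$, let $\mathcal Q_{u,w}$ be the cyclic family of $4$-cycles $uv_1wv_2,\,uv_2wv_3,\,\dots,\,uv_cwv_1$ (indices modulo $c$); then $|\mathcal Q_{u,w}|=c$, each $v_i$ lies in exactly two members, and the degree-sums of the members of $\mathcal Q_{u,w}$ add up to $c\,(d(u)+d(w))+2\sum_{i=1}^{c}d(v_i)$.

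Setting $\mathcal Q:=\bigcup_{\{u,w\}}\mathcal Q_{u,w}$ (a multiset), we have $|\mathcal Q|=\sum_{c(u,w)\ge 2}c(u,w)\le\sum_v\binom{d(v)}2=:P$, and, using the identities $\sum_{\{u,w\}}c(u,w)(d(u)+d(w))=2\sum_{xy\in E(G)}d(x)d(y)-\sum_v d(v)^2$ and $\sum_{\{u,w\}}\sum_{v\in N(u)\cap N(w)}d(v)=\sum_v d(v)\binom{d(v)}2$, the total degree-sum over $\mathcal Q$ equals $N_0$ minus the contribution of the pairs with a unique common neighbour, where
$$N_0\;:=\;2\!\!\sum_{xy\in E(G)}\!\! d(x)d(y)\;+\;\sum_v d(v)^3\;-\;2\sum_v d(v)^2,$$
and the missing contribution is at most $4(n-1)\binom n2<2n^3$.

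The heart of the argument is the inequality $N_0\ge 4d(G)\,P$. I would prove it by writing $d(v)=d(G)+\delta_v$ with $\sum_v\delta_v=0$; a short expansion gives
$$N_0-4d(G)P\;=\;3d(G)\sum_v\delta_v^2-2\sum_v\delta_v^2+2\!\!\sum_{xy\in E(G)}\!\!\delta_x\delta_y+\sum_v\delta_v^3,$$
and then the single estimate $2\sum_{xy\in E(G)}\delta_x\delta_y\ge-\sum_{xy\in E(G)}(\delta_x^2+\delta_y^2)=-\sum_v d(v)\delta_v^2=-d(G)\sum_v\delta_v^2-\sum_v\delta_v^3$ collapses the right-hand side to $2(d(G)-1)\sum_v\delta_v^2\ge 0$. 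Granting this and that all degree-sums are nonnegative (so the numerator below is positive in the present range), the $4$-cycle $C\in\mathcal Q$ maximizing $\sum_{v\in C}d(v)$ satisfies
$$\sum_{v\in C}d(v)\;\ge\;\frac{N_0-2n^3}{|\mathcal Q|}\;\ge\;\frac{4d(G)P-2n^3}{P}\;=\;4d(G)-\frac{2n^3}{P},$$
where the middle step uses $0<|\mathcal Q|\le P$. Finally $P\ge n\binom{d(G)}2\ge\tfrac12 n\,d(G)(d(G)-1)$, so in the range $d(G)\ge n^{3/4}$ the error is $\tfrac{2n^3}{P}\le\tfrac{4n^2}{d(G)(d(G)-1)}=O(n^{1/2})=O(n^{3/4})$, which finishes the plan.

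The step I expect to be the real obstacle is proving $N_0\ge 4d(G)P$ \emph{with no loss}. The first bounds one is tempted to use — estimating $\sum_{xy\in E(G)}d(x)d(y)$ from below by convexity of $t\mapsto t^3$ together with Cauchy--Schwarz — waste a constant factor and only deliver something like $N_0\ge 3d(G)P$, which is useless here; the point is to keep the cross term $\sum_{xy\in E(G)}\delta_x\delta_y$ and the cubic term $\sum_v\delta_v^3$ together so that they cancel exactly as above. A secondary nuisance is that the pairs $\{u,w\}$ with exactly one common neighbour must genuinely be discarded (they produce no $4$-cycle), and verifying that this costs only $O(n^{3/4})$ in the \emph{averaged} degree-sum is precisely what forces the initial reduction to $d(G)\ge n^{3/4}$, i.e.\ to $P=\Omega(n^{5/2})$.
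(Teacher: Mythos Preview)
Your proof is correct and takes a genuinely different route from the paper. The paper argues by contradiction: assuming every $4$-cycle has degree-sum below $4d(G)-36n^{3/4}$, it partitions $V(G)$ into bands $A_i,B_i$ of width $\epsilon n=9n^{3/4}$ according to how far the degrees lie from $d(G)$, observes that $G[B]$, $G[A_1]$ and each $G\bigl[\bigcup_{j\le i+1}A_j,\,B_i\bigr]$ must then be $C_4$-free, deletes all those edges using Reiman's bound to obtain $G'$ with $d(G')\ge d(G)-O(n^{3/4})$, and finally notes that every remaining edge $xy$ satisfies $d_{G'}(x)+d_{G'}(y)<2d(G')$, contradicting Lemma~\ref{lem-edge}. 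Your approach instead averages directly over the structured multiset $\mathcal Q$ of $4$-cycles, and the real content is the exact inequality $N_0-4d(G)P=2(d(G)-1)\sum_v\delta_v^{2}+\bigl(d(G)\sum_v\delta_v^{2}+2\sum_{xy\in E}\delta_x\delta_y+\sum_v\delta_v^{3}\bigr)\ge 2(d(G)-1)\sum_v\delta_v^{2}$, established with no loss via the single AM--GM step $2\delta_x\delta_y\ge-\delta_x^2-\delta_y^2$. The paper's method is more structural and would adapt readily to other short cycles (one only needs a Zarankiewicz-type bound for the relevant $C_4$-free pieces); your method is more elementary---pure counting plus one algebraic cancellation---and in fact yields the stronger error $O(n^{1/2})$ once $d(G)\ge n^{3/4}$, the $O(n^{3/4})$ entering only through the trivial range $d(G)<n^{3/4}$.
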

\begin{proof}
Fix $\epsilon:=9n^{-\frac14}$.
Note that $G$ must contain 4-cycles by the result of Reiman \cite{R}.
Suppose to the contrary that any 4-cycle $v_1v_2v_3v_4$ in $G$ satisfies $d(v_1)+d(v_2)+d(v_3)+d(v_4)< 4d(G)-4\epsilon n.$
Let $A:=\{v\in V:d(v)<d(G)\}$ and $B:=\{v\in V:d(v)\ge d(G)\}$.
Then $A\cup B$ forms a partition of $V(G)$ such that $G[B]$ does not contain any 4-cycle.
		
For each $1\le i\le d(G)/\epsilon n$, let $A_i:=\{v\in V:d(G)-i \epsilon n\le d(v)< d(G)-(i-1)\epsilon n\}$. Then these $A_i$'s form a partition of $A$.
For each $1\le i\le\left(n-d(G)\right)/\epsilon n$, let $B_i:=\{v\in V:d(G)+(i-1) \epsilon n\le d(v)< d(G)+i\epsilon n\}$. Then these $B_i$'s form a partition of $B$.
It is not hard to check that $G[A_1]$ does not contain any 4-cycle,
and for each $1\le i\le\left(n-d(G)\right)/\epsilon n$, $G\left[\bigsqcup_{j=1}^{i+1}A_j,B_i\right]$ does not contain any 4-cycle.
		
We delete all edges in $G[B]$, $G[A_1]$ and $G\left[\bigsqcup_{j=1}^{i+1}A_j,B_i\right]$ for each $1\leq i\le\left(n-d(G)\right)/\epsilon n$ to get a spanning subgraph $G'$ of $G$.
By the result of Reiman \cite{R}, we can obtain $$e(G')\ge e(G)-\left(2+\left(n-d(G)\right)/\epsilon n\right) n^{\frac32}\ge e(G)-2n^{\frac32}-\frac19 n^{\frac74}\ge e(G)-\frac{19}{9}n^{\frac74}.$$
Thus we have $$d(G')\ge d(G)-\frac{38}{9}n^{\frac34}.$$
Note that any edge of $G'$ is either contained in $A$, or between $A_j$ and $B_i$ for some $j\ge i+2$; moreover, we have $e(G'[A_1])=0$.
Thus, as $n$ is large enough, it is easy to check that for any edge $xy$ in $G'$, $$d_{G'}(x)+d_{G'}(y)<2d(G)-\epsilon n=2d(G)-9 n^{\frac34}<2d(G').$$
This contradicts Lemma \ref{lem-edge}, thus proving Lemma \ref{lem-4cycle}.	
\end{proof}
	
The following lemma is derived from Lemma \ref{key}, which provides an essential structure to the proof of Theorem \ref{Thm-4critical}.
\begin{lem}\label{key-4critical}
Let $G$ be a 4-critical graph. Suppose $v_1v_2v_3v_4$ is a 4-cycle in $G$,
and $V_1,V_2,V_3,V_4$ are four sets such that $\{v_2,v_4\}\subseteq V_1\subseteq N(v_1)$, $\{v_1,v_3\}\subseteq V_2\subseteq N(v_2)$, $\{v_2,v_4\}\subseteq V_3\subseteq N(v_3)$, and $\{v_1,v_3\}\subseteq V_4\subseteq N(v_4)$.
Let $X=V_1\cap V_3$ and $Y=V_2\cap V_4$.
Then there exist sets $X''$ and $Y''$ such that
\begin{itemize}
			\item $X''\cap\left(V_1\cup V_2\cup V_3\cup V_4\right)=\emptyset=Y''\cap\left(V_1\cup V_2\cup V_3\cup V_4\right)$,
			\item $e(G[X'',X])\le |X|$ and $e(G[Y'',Y])\le |Y|$, and
			\item $|X''|\ge|X|-2t_G(v_1)-2t_G(v_3)-2$ and $|Y''|\ge|Y|-2t_G(v_2)-2t_G(v_4)-2$.
\end{itemize}
\end{lem}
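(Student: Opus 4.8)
The plan is to apply Lemma~\ref{key} twice, once for the ``diagonal'' pair $\{v_1,v_3\}$ and once for the ``diagonal'' pair $\{v_2,v_4\}$, exploiting the fact that in a $4$-critical graph $k-3=1$, so the clique $K_{k-3}$ in Lemma~\ref{key} is just a single vertex. First I would set up the application that produces $X''$. Take $x_1:=v_1$ (playing the role of the $K_{k-3}$, which is trivially a clique), take $u:=v_3$, and take $W:=X=V_1\cap V_3\subseteq N(v_1)\cap N(v_3)$. Lemma~\ref{key} then yields a set $X'$ and a bijection $\varphi:X\to X'$ with $N(\varphi(w))\cap X=\{w\}$ and $N(w)\cap X'=\{\varphi(w)\}$ for each $w\in X$; in particular the edges between $X$ and $X'$ form a matching, so $e(G[X',X])\le|X|$. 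Symmetrically, applying Lemma~\ref{key} with $x_1:=v_2$, $u:=v_4$, and $W:=Y=V_2\cap V_4\subseteq N(v_2)\cap N(v_4)$ produces a set $Y'$ with the matching property and $e(G[Y',Y])\le|Y|$.

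The next step is to pass from $X'$ to the desired $X''$ by deleting the few vertices of $X'$ that lie in $V_1\cup V_2\cup V_3\cup V_4$ (and likewise for $Y'$). Here the argument mirrors the bookkeeping already carried out in the proof of Lemma~\ref{lem2}. I would bound $|X'\cap V_j|$ for each $j$: since $v_2\in V_1\cup V_3$ (being adjacent to $v_1$ and $v_3$) lies in $X$, any vertex of $X'$ adjacent to $v_2$ is $\varphi(v_2)$ or similar — more precisely, $|X'\cap V_2|\le|X'\cap N(v_2)|\le 1$ because $N(w)\cap X'$ has size one for each $w\in X$ and $v_2\in X$; similarly $|X'\cap V_4|\le 1$. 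For $X'\cap V_1$: if $v'\in X'\cap V_1\subseteq X'\cap N(v_1)$, then its unique $X$-neighbour $w=\varphi^{-1}(v')$ together with $v_1$ forms a triangle $v_1 w v'$ (since $w\in X\subseteq N(v_1)$ and $v'\in N(v_1)$), and distinct such $v'$ give distinct triangles through $v_1$, so $|X'\cap V_1|\le 2t_G(v_1)$; likewise $|X'\cap V_3|\le 2t_G(v_3)$. Hence $|X'\cap(V_1\cup V_2\cup V_3\cup V_4)|\le 2t_G(v_1)+2t_G(v_3)+2$. Setting $X'':=X'\setminus(V_1\cup V_2\cup V_3\cup V_4)$ gives $X''\cap(V_1\cup\dots\cup V_4)=\emptyset$, preserves $e(G[X'',X])\le e(G[X',X])\le|X|$, and yields $|X''|\ge|X|-2t_G(v_1)-2t_G(v_3)-2$. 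The same computation with the roles of the pairs swapped gives $Y''$.

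The main obstacle I anticipate is the first bullet, namely ensuring the disjointness $X''\cap(V_1\cup\dots\cup V_4)=\emptyset$ cleanly and simultaneously keeping the matching/size bounds intact — in other words, checking that the vertices removed to achieve disjointness are exactly accounted for by the triangle and neighbourhood counts above, with no double counting and no loss in the edge bound. A secondary point to be careful about is that Lemma~\ref{key} as stated only guarantees $W$ is independent and $W'\cap W=\emptyset$ when $|W|\ge 3$; when $|X|\le 2$ or $|Y|\le 2$ the claimed inequalities $|X''|\ge|X|-2t_G(v_1)-2t_G(v_3)-2$ etc.\ are trivially true (the right side is at most $0$), so one may assume the relevant $W$ has size at least $3$ and thus $W'\cap W=\emptyset$, which is what makes the subsequent intersection estimates go through. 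Finally, one should note that there is no interaction required between the $X$-side and the $Y$-side constructions: they are produced by two independent invocations of Lemma~\ref{key}, so no joint disjointness between $X''$ and $Y''$ is claimed or needed.
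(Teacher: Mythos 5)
Your proposal is correct and follows essentially the same route as the paper: apply Lemma~\ref{key} twice (once to $X\subseteq N(v_1)\cap N(v_3)$ via the single-vertex clique $\{v_1\}$ with $u=v_3$, once to $Y$ symmetrically), set $X'':=X'\setminus(V_1\cup\cdots\cup V_4)$ and $Y'':=Y'\setminus(V_1\cup\cdots\cup V_4)$, and bound the discarded vertices by $|X'\cap V_2|,|X'\cap V_4|\le 1$ (since $v_2,v_4\in X$ and each vertex of $X$ has exactly one $X'$-neighbour) and $|X'\cap V_1|\le 2t(v_1)$, $|X'\cap V_3|\le 2t(v_3)$ via the associated triangles. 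Your extra remark about the trivial case $|X|\le 2$ (resp.\ $|Y|\le 2$) is a harmless additional safeguard that the paper leaves implicit; in fact the paper's factor of $2$ in $2t(v_1)$ already absorbs the possibility that the map from $X'\cap V_1$ to triangles through $v_1$ is two-to-one, so the argument goes through without invoking $X\cap X'=\emptyset$.
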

\begin{proof}
As $X\subseteq N(v_1)\cap N(v_3)$, by Lemma \ref{key} for $k=4$, there exists a set $X'\subseteq V(G)$ and a bijection $\varphi: X\rightarrow X'$ such that $X'=\{\varphi(x):x\in X\}$, and for each $x\in X$, we have both $N(\varphi(x))\cap X=\{x\}$ and $N(x)\cap X'=\{\varphi(x)\}$.
We define $X'':=X'\backslash\left(V_1\cup V_2\cup V_3\cup V_4\right)$, then obviously $X''\cap\left(V_1\cup V_2\cup V_3\cup V_4\right)=\emptyset$ and $e(G[X'',X])\le |X|$.
		
As $Y\subseteq N(v_2)\cap N(v_4)$, by Lemma \ref{key} for $k=4$, there exists a set $Y'\subseteq V(G)$ and a bijection $\phi: Y\rightarrow Y'$ such that $Y'=\{\phi(y):y\in Y\}$, and for each $y\in Y$, we have both $N(\phi(y))\cap Y=\{y\}$ and $N(y)\cap Y'=\{\phi(y)\}$.
We define $Y'':=Y'\backslash\left(V_1\cup V_2\cup V_3\cup V_4\right)$, then obviously $Y''\cap\left(V_1\cup V_2\cup V_3\cup V_4\right)=\emptyset$ and $e(G[Y'',Y])\le |Y|$.
		
Then we want to show the last property.
		
All vertices in $V_2$ are adjacent to the vertex $v_2\in X$.
Then we have $|X'\cap V_2|\le 1$ since $|N(x)\cap X'|=1$ for each $x\in X$. Similarly, we have $|X'\cap V_4|\le 1$, $|Y'\cap V_1|\le 1$, and $|Y'\cap V_3|\le 1$.
		
All vertices in $V_1$ are adjacent to the vertex $v_1$.
Since each vertex in $X'$ has a neighbor in $X\subseteq N(v_1)$, we can check that $|X'\cap V_1|\le 2 t(v_1)$. Similarly, we have $|X'\cap V_3|\le 2 t(v_3)$, $|Y'\cap V_2|\le 2 t(v_2)$, $|Y'\cap V_4|\le 2 t(v_4)$.
Therefore, $$|X''|= |X'|-|X'\cap\left(V_1\cup V_2\cup V_3\cup V_4\right)|\ge |X|-2t(v_1)-2t(v_3)-2,$$
and $$|Y''|= |Y'|-|Y'\cap\left(V_1\cup V_2\cup V_3\cup V_4\right)|\ge |Y|-2t(v_2)-2t(v_4)-2,$$
completing the proof.		
\end{proof}
	
Now we are ready to prove Theorem \ref{Thm-4critical}.
	
\begin{proof}[\bf Proof of Theorem \ref{Thm-4critical}]
Throughout this proof, we assume that $n$ is sufficiently large, and the subscripts of the notations such as $v_i$'s and $V_i$'s are under module $4$.
Suppose by contradiction that there exists an $n$-vertex 4-critical graph $G$ with $e(G)\ge 0.164 n^2$.
By the result in \cite{S}, $G$ contains at most $n$ copies of triangles.
Let $V_0:=\{v\in V(G): t_G(v)\ge \sqrt n\}$.
Then clearly we have $|V_0|<3\sqrt n$.
Let $G':=G[V(G)-V_0]$.
It is not hard to see $e(G')\ge e(G)-n |V_0|>e(G)-3n^{\frac32}\ge 0.164 n^2-o(n^2)$.
Note that $t(G')\le t(G)\le n$.
Therefore, by deleting at most $n$ edges from $G'$, we can get a subgraph $G''\subseteq G'$ 
such that $t(G'')=0$, $e(G'')\ge e(G')-n\ge 0.164 n^2-o(n^2)$, and $t_{G}(v)<\sqrt n$ for each $v\in V(G'')=V(G)-V_0$.
By applying Lemma \ref{lem-4cycle} to $G''$, we can get a 4-cycle $v_1v_2v_3v_4$ in $G''$ such that
\begin{align}\label{V-sum}
|V_1|+|V_2|+|V_3|+|V_4|\ge 8e(G'')/n-o(n)\ge 1.312 n-o(n),
\end{align}
where $V_i:=N_{G''}(v_i)$ for each $1\le i\le 4$.
Note that for each $1\le i\le 4$, every vertex in $V_i\cap V_{i+1}$ must form a triangle with the vertices $v_i, v_{i+1}$ in $G''$, which contradicts the fact $t(G'')=0$.
So it is clear that $$V_i \cap V_{i+1}=\emptyset \mbox{ for each }1\le i\le4.$$
Also it is easy to check that $\{v_{i-1},v_{i+1}\}\subseteq V_i\subseteq N_{G}(v_i)$ for each $1\le i\le4$.
Define $X=V_1\cap V_3$ and $Y=V_2\cap V_4$.
Applying Lemma \ref{key-4critical}, we can get two sets $X'',Y''$ satisfying the three properties of Lemma \ref{key-4critical}.
Note that $X''$ and $Y''$ are disjoint from $V_1\cup V_2\cup V_3\cup V_4$, $V_1\cap V_3=X$, $V_2\cap V_4=Y$,
and $V_i\cap V_{i+1}=\emptyset$ for each $1\le i\le 4$.
So we can see that $$|V_1|+|V_2|+|V_3|+|V_4|-|X|-|Y|+|X''\cup Y''|\le n.$$
Besides, by using the last property in Lemma \ref{key-4critical}, we have
\begin{align}\label{''}
|X''\cup Y''|\ge \max\{|X''|,|Y''|\}\ge \frac{|X''|+|Y''|}{2}\ge\frac {|X|+|Y|}{2}-O(\sqrt n).
\end{align}
The above two inequalities tell us that
\begin{align}\label{X+Y}
\frac{|X|+|Y|}{2}\ge |V_1|+|V_2|+|V_3|+|V_4|-n-O(\sqrt n)\ge  0.312 n-o(n).
\end{align}
		
Then we consider the non-edges of the graph $G$, i.e., the edges of the graph $\overline{G}$.
First, since $V_i= N_{G''}(v_i)\subseteq N_{G}(v_i)$ and $v_i\in V(G'')$, we can see $e(G[V_i])\le t_{G}(v_i)\le \sqrt n$ for each $1\le i\le 4$.
So $$e(\overline{G}[V_i])\ge \binom{|V_i|}{2}-o(n^2)=\frac12 |V_i|^2-o(n^2) \mbox{ for each }1\le i\le4.$$
Thus by noting $V_1\cap V_3=X$, $V_2\cap V_4=Y$, and $V_i\cap V_{i+1}=\emptyset$ for each $1\le i\le 4$, we can get
$$\left|\bigcup_{i=1}^{4}E(\overline{G}[V_i])\right|\ge\sum_{i=1}^{4}e(\overline{G}[V_i])-\binom{|X|}{2}-\binom{|Y|}{2}\ge\frac12 \left(\sum_{i=1}^{4}|V_i|^2-|X|^2-|Y|^2\right)-o(n^2).$$
Next, since $G$ can be made triangle-free by deleting at most $n$ edges and any $n$-vertex triangle-free graph has at most $\frac14 n^2$ edges, we can see $e(G[X''\cup Y''])\le \frac14 |X''\cup Y''|^2+n$, and thus $$e(\overline{G}[X''\cup Y''])\ge \frac14 |X''\cup Y''|^2-o(n^2).$$
By the properties on $X'', Y''$ we derive from Lemma \ref{key-4critical}, we can obtain
$$e(\overline{G}[X'',X])\ge |X''| |X|-|X|\ge |X|^2-o(n^2),$$
$$e(\overline{G}[Y'',Y])\ge|Y''||Y|-|Y|\ge |Y|^2-o(n^2).$$
By the above three inequalities, we can deduce that
\begin{align*}
e(G)&= \binom{n}{2}-e(\overline{G})\le \binom{n}{2}-\frac12 \left(\sum_{i=1}^{4}|V_i|^2-|X|^2-|Y|^2\right)-\frac14 |X''\cup Y''|^2-|X|^2-|Y|^2+o(n^2)\\
&\le \frac12 n^2-\frac18 \left(|V_1|+|V_2|+|V_3|+|V_4|\right)^2-\frac14\left(\frac{|X|+|Y|}{2}\right)^2-\left(\frac{|X|+|Y|}{2}\right)^2+o(n^2)\\
&\le \frac12 n^2-\frac18 (1.312n)^2-\frac54 (0.312n)^2+o(n^2)<0.1632n^2+o(n^2),
\end{align*}
where the second inequality comes from the inequality (\ref{''}),
and the third inequality comes from the inequalities (\ref{V-sum}) and (\ref{X+Y}).
This contradicts the assumption that $e(G)\ge 0.164n^2,$ completing the proof of Theorem \ref{Thm-4critical}.	
\end{proof}

Our understanding for the functions $f_k(n)$ is generally poor, and it is not even known if 
\begin{align}\label{equ:f45}
\mbox{$f_4(n)<f_5(n)$ holds for sufficiently large integers $n$.}
\end{align}
So it seems to be a natural next step to pursue the question that if $f_4(n)\leq cn^2$ holds for some constant $c<\frac{4}{31}$ and sufficiently large $n$. Note that if this is true, then it would imply \eqref{equ:f45}.

\bigskip

\noindent{\bf Acknowledgement.} The authors would like to thank Prof. Alexandr Kostochka for many valuable comments on a preliminary version of the manuscript.
	
	\bibliographystyle{unsrt}
	
\end{document}